\documentclass{amsart}
\usepackage[english]{babel}
\usepackage[latin1]{inputenc}
 \usepackage[all]{xy}
 \usepackage[pagebackref]{hyperref}

\usepackage{amsmath,amsfonts,amssymb,amsthm,amscd,array,stmaryrd,mathrsfs, mathdots}
\usepackage[makeroom]{cancel}
\PassOptionsToPackage{table}{xcolor}
\usepackage{pstricks}
\usepackage{tikz}
\usepackage{graphicx}

\setlength{\textwidth}{16truecm}
\setlength{\hoffset}{-1.5truecm}


\theoremstyle{plain}

\newtheorem{thm}{Theorem}[section]
\newtheorem{lem}{Lemma}[section]
\newtheorem{cor}[lem]{Corollary}
\newtheorem{prop}[lem]{Proposition}

\theoremstyle{definition}

\newtheorem{rem}[lem]{Remark}
\newtheorem{ex}[lem]{Example}

\let\ssection=\section
\renewcommand{\section}{\setcounter{equation}{0}\ssection}



\newcommand{\R}{\mathbb{R}}
\newcommand{\Z}{\mathbb{Z}}

\newcommand{\N}{\mathbb{N}}

\newcommand{\Q}{\mathbb{Q}}



\newcommand{\B}{\mathcal{B}}

\newcommand{\E}{\mathcal{E}}



\newcommand{\Tr}{\textup{Tr\,}}

\newcommand{\Id}{\mathrm{Id}}
\newcommand{\GL}{\mathrm{GL}}

\newcommand{\PGL}{\mathrm{PGL}}
\newcommand{\SL}{\mathrm{SL}}
\newcommand{\PSL}{\mathrm{PSL}}


\def\D{\Delta}

\def\s{\sigma}


\begin{document}

\title[$q$-deformed cubic equations]
{On $q$-deformed cubic equations:\\
the quantum heptagon and nonagon}

\author[V. Ovsienko]{Valentin Ovsienko}
\author[A. Ustinov]{Alexey Ustinov}

\address{
Valentin Ovsienko,
Centre National de la Recherche Scientifique,
Laboratoire de Math\'ematiques,
Universit\'e de Reims,
U.F.R. Sciences Exactes et Naturelles,
Moulin de la Housse - BP 1039,
51687 Reims cedex 2,
France}

\address{
Alexey Ustinov,
Faculty of Computer Science, HSE University, Moscow, Russia; and
Khabarovsk Division of the Institute for Applied Mathematics,
Far-Eastern Branch, Russian Academy of Sciences, Russia
}

\email{
valentin.ovsienko@univ-reims.fr,
Ustinov.Alexey@gmail.com}
\keywords{Cubic equation, casus irreducibilis, quadratic discriminant, $q$-analogues, modular group}

\begin{abstract}

The recent notion of $q$-deformed irrational numbers is characterized by the invariance 
with respect to the action of the modular group $\PSL(2,\Z)$, 
or equivalently under the Burau representation of the braid group~$B_3$.
The theory of $q$-deformed quadratic irrationals and quadratic equations with integer coefficients
is known and entirely based on this invariance.
In this paper, we consider the case of cubic irrationals.
We show that irreducible cubic equations with three distinct real roots 
and cyclic Galois group~$C_3$ (or $\Z/3\Z$) acting by a third order element of $\PSL(2,\Z)$,
have a canonical $q$-deformation, that we describe.
This class of cubic equations contains well-known examples including the equations
that describe regular $7$- and $9$-gons.

\end{abstract}

\maketitle

\thispagestyle{empty}

\section{Introduction and main results}

The notion of $q$-deformed, or ``quantized'' rational numbers was introduced in~\cite{MGOfmsigma}
and extended to irrational numbers in~\cite{MGOexp}.
This $q$-deformation can be understood as a map from $\R_{\geq0}$ to the space of formal power series
in one variable, $q$, with integer coefficients
$$
\left[\,.\,\right]_q:\R_{\geq0}\to\Z[[q]]
$$
which is uniquely determined by the property of $\PSL(2,\Z)$-invariance
and the choice of the image of one point, namely $[0]_q:=0$.
For a real $x\geq0$, the $q$-deformation $\left[x\right]_q$
is a formal power series in~$q$.
This notion is extended to negative numbers using the formula
$$
\left[x-1\right]_q=\frac{[x]_q-1}{q},
$$
and the $q$-deformation $\left[x\right]_q$ for $x<0$ is defined as a Laurent series in~$q$.

Properties of $q$-real numbers have been studies by several authors.
In particular, the case of quadratic irrationals was studied in~\cite{LMGadv}, where it was proved
that, if $x_1$ and $x_2$ are roots of a quadratic equation
$$
ax^2+bx+c=0
$$
with integer coefficients $a,b,c$, then both infinite series
$\left[x_1\right]_q$ and $\left[x_2\right]_q$
satisfy an equation of the form
$$
A(q)X^2+B(q)X+C(q)=0,
$$
where $A(q),B(q),C(q)$ are some polynomials in~$q$,
and where the indeterminate $X$ is a Laurent power series in~$q$.

The problem to investigate the case of algebraic irrationals of higher degree
was already formulated in~\cite{MGOexp}.
We study this problem in the case where $x$ is a cubic irrational, that is a solution to
a cubic equation
\begin{equation}
\label{Cubic}
ax^3+bx^2+cx+d=0
\end{equation}
with $a,b,c,d\in\Z$.

We will assume that~\eqref{Cubic} is irreducible 
and has three distinct real roots, $x_1,x_2,x_3$.
This case is classically known under the name of {\it casus irreducibilis}.
Let
$$
X_1(q):=\left[x_1\right]_q,
\qquad
X_2(q):=\left[x_2\right]_q,
\qquad
X_3(q):=\left[x_3\right]_q
$$
be $q$-deformations of $x_1,x_2,x_3$ in the sense of~\cite{MGOexp}.
The precise problem that interests us is the following:
is there any $q$-analogue of the equation~\eqref{Cubic} 
to which the three series $X_1(q),X_2(q),X_3(q)$ are solutions?

We observe that, in general, the situation is much more complicated
than that of the quadratic case.
In particular, the above problem is out of reach so far.
However, we found two classes of cubic equations
that admit canonical $q$-deformations.

We consider two families of cubic equations
\begin{equation}
\label{Cubic1}
x^3 - bx^2 + \left(b-3\right)x + 1 = 0,
\end{equation}
and
\begin{equation}
\label{Cubic2}
x^3 - bx^2 - \left(b+3\right)x - 1 = 0,
\end{equation}
where $b\in\Z$ is a parameter.
For every~$b\in\Z$, the equation~\eqref{Cubic1} has two positive and one negative roots,
and the equation~\eqref{Cubic1} has one positive and two negative roots.

Both families are characterized by the fact that the discriminant is an integer square,
more precisely
$$
\D_-=(b^2 - 3b + 9)^2
\qquad\hbox{and}\qquad
\D_+=(b^2 + 3b + 9)^2,
$$ 
respectively,
and the Galois group, 
which has to be the cyclic group $\Z_3$ (or $C_3$, or $\Z/3\Z$),
acts on the roots as a third order element of $\PSL(2,\Z)$,
preserving the equation.

Note that the first family, at $b=0$, contains a remarkable equation
\begin{equation}
\label{NonaEq}
x^3 -3x + 1 = 0
\end{equation}
related to the regular nonagon.
Indeed, after substitution $x=z+z^{-1}$, the equation~\eqref{NonaEq} becomes
$\Phi_9(z)=0$, where $\Phi_9(z)=z^6+z^3+1$ is the $9$-th cyclotomic polynomial. 
Therefore, the roots of the equation~\eqref{NonaEq} are
\begin{equation}
\label{NonaRoots}
x_1=2\cos\left(\frac{2\pi}{9}\right),
\qquad
x_2=2\cos\left(\frac{4\pi}{9}\right),
\qquad
x_3=2\cos\left(\frac{8\pi}{9}\right).
\end{equation}

The second family, at $b=-1$, contains another remarkable equation
\begin{equation}
\label{HeptaEq}
x^3 + x^2 -2x - 1 = 0
\end{equation}
related to the regular heptagon. After substitution $x=z+z^{-1}$, the equation~\eqref{HeptaEq} 
becomes $\Phi_7(z)=0$, where $\Phi_7(z)=z^6+z^5+z^4+z^3+z^2+z+1$ is the $7$-th cyclotomic polynomial. 
In particular, the roots of~\eqref{HeptaEq} are
\begin{equation}
\label{HeptaRoots}
x_1=2\cos\left(\frac{2\pi}{7 }\right),
\qquad 
x_2=2\cos\left(\frac{4\pi}{7 }\right),
\qquad 
x_3=2\cos\left(\frac{8\pi}{7 }\right).
\end{equation}

Our main result is the following.

\begin{thm}
\label{FirstThm}
The $q$-deformations $X_1(q),X_2(q),X_3(q)$ of the roots of~\eqref{Cubic1},
resp. of the roots of~\eqref{Cubic2} 
satisfy the equation
\begin{eqnarray}
\label{Cubic1Quant}
X^3 - B_-(q)X^2 + \left(B_-(q)-3\right)X + 1 &=& 0,\\[6pt]
\label{Cubic2Quant}
X^3 - B_+(q)X^2 - \left(q^{-1}B_+(q)+3q^{-2}\right)X - q^{-3} &=& 0.
\end{eqnarray}
The coefficients~$B_-(q)$ and~$B_+(q)$  in~\eqref{Cubic1Quant} and~\eqref{Cubic2Quant} 
are a certain Laurent series in~$q$.
\end{thm}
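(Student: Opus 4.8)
The plan is to reduce everything to the geometric fact, guaranteed by the hypotheses, that the three real roots form a single orbit of an order-three element of $\PSL(2,\Z)$. First I would pin this element down explicitly. For~\eqref{Cubic1} it is
\[
M_-\colon x\longmapsto \frac{1}{1-x},
\qquad
M_-=\begin{pmatrix} 0 & 1\\ -1 & 1\end{pmatrix},
\]
and for~\eqref{Cubic2} it is
\[
M_+\colon x\longmapsto \frac{-1}{1+x},
\qquad
M_+=\begin{pmatrix} 0 & -1\\ 1 & 1\end{pmatrix}.
\]
Each claim is a one-line substitution: writing $P_-$ and $P_+$ for the left-hand sides of~\eqref{Cubic1} and~\eqref{Cubic2}, one checks the identity $(1-x)^3\,P_-\!\big(\tfrac1{1-x}\big)=-P_-(x)$ and its analogue for $P_+$, so the transformation maps roots to roots. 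Since $M_\pm^{\,3}=-\Id$ in $\SL(2,\Z)$ these elements have order three in $\PSL(2,\Z)$, and as $M_\pm$ have no real fixed point every real orbit has exactly three elements, matching the three distinct roots.

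The second step is to carry this orbit structure through the $q$-deformation. By the defining $\PSL(2,\Z)$-equivariance of $[\,\cdot\,]_q$ from~\cite{MGOexp} (the same property on which the quadratic case of~\cite{LMGadv} rests), for every $M\in\PSL(2,\Z)$ one has $[M\cdot x]_q=M_q\cdot[x]_q$, where $M\mapsto M_q$ is the homomorphism into $\PGL\big(2,\Z[q,q^{-1}]\big)$ fixed on the generators $R=\left(\begin{smallmatrix}1&1\\0&1\end{smallmatrix}\right)$, $S=\left(\begin{smallmatrix}0&-1\\1&0\end{smallmatrix}\right)$ by $R_q=\left(\begin{smallmatrix}q&1\\0&1\end{smallmatrix}\right)$, $S_q=\left(\begin{smallmatrix}0&-q^{-1}\\1&0\end{smallmatrix}\right)$, and the dot is the Möbius action. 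Applying this to the cyclic relations of the first step gives $X_2=(M_\pm)_q\cdot X_1$, $X_3=(M_\pm)_q\cdot X_2$, $X_1=(M_\pm)_q\cdot X_3$. Writing $M_-=(RS)^{-1}$ and $M_+=SR$ as words in the generators, a short computation yields
\[
(M_-)_q=(R_qS_q)^{-1}=\begin{pmatrix} 0 & 1\\ -1 & 1\end{pmatrix},
\qquad
(M_+)_q=S_qR_q=\begin{pmatrix} 0 & -q^{-1}\\ q & 1\end{pmatrix}.
\]
Notably $(M_-)_q$ carries no $q$ at all whereas $(M_+)_q$ does, which is exactly the source of the asymmetry between~\eqref{Cubic1Quant} and~\eqref{Cubic2Quant}.

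The last step is a direct symmetric-function computation. Writing $(M_\pm)_q=\left(\begin{smallmatrix}\alpha&\beta\\\gamma&\delta\end{smallmatrix}\right)$, each relation $X_{i+1}=(M_\pm)_q\cdot X_i$ clears denominators to the bilinear identity $\gamma X_iX_{i+1}+\delta X_{i+1}-\alpha X_i-\beta=0$. Summing the three identities gives $\gamma e_2+(\delta-\alpha)e_1-3\beta=0$, and multiplying the $i$-th identity by $X_{i-1}$ (indices mod $3$) before summing gives $3\gamma e_3+(\delta-\alpha)e_2-\beta e_1=0$, where $e_1,e_2,e_3$ are the elementary symmetric functions of $X_1,X_2,X_3$. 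Setting $B_\pm:=e_1=X_1+X_2+X_3$ and solving these two linear equations for $e_2,e_3$ reproduces the coefficients exactly: for $(M_-)_q$ one obtains $e_2=B_--3$ and $e_3=-1$, and for $(M_+)_q$ one obtains $e_2=-(q^{-1}B_++3q^{-2})$ and $e_3=q^{-3}$. Since $X_1,X_2,X_3$ satisfy $X^3-e_1X^2+e_2X-e_3=0$, this is precisely~\eqref{Cubic1Quant} and~\eqref{Cubic2Quant}.

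I expect the only genuine obstacle to lie in the second step: justifying that the power-series equivariance $[M\cdot x]_q=M_q\cdot[x]_q$ applies verbatim to the irrational, partly negative roots $x_i$, and fixing the correct $q$-normalization of the non-commuting generators $R_q,S_q$, since the relative powers of $q$ in $(M_+)_q$—hence the exponents $q^{-1},q^{-2},q^{-3}$ in~\eqref{Cubic2Quant}—depend on it. Once the equivariance and the matrices $(M_\pm)_q$ are in hand, the rest is the finite linear computation above; note that both relations among $e_1,e_2,e_3$ are homogeneous of degree one in $\alpha,\beta,\gamma,\delta$, so the projective ambiguity in $(M_\pm)_q$ and the choice between $M_\pm$ and $M_\pm^{-1}$ are immaterial.
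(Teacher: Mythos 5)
Your proposal is correct and follows essentially the same route as the paper: identify the order-three element of $\PSL(2,\Z)$ that cyclically permutes the roots, transfer the resulting relations to $X_1(q),X_2(q),X_3(q)$ by the $\PSL(2,\Z)$-equivariance of $[\,\cdot\,]_q$, and extract two Vieta-type identities. The only (immaterial) differences are that you work with the inverse M\"obius maps and obtain $e_2,e_3$ by summing the bilinear relations, where the paper instead uses the telescoping products $X_1X_2X_3=-1$ and $\left(X_1-1\right)\left(X_2-1\right)\left(X_3-1\right)=1$ (and their analogues for the second family).
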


The coefficients $B_\mp(q)$ in~\eqref{Cubic1Quant} and~\eqref{Cubic2Quant} 
is equal to the sum of the $q$-deformed roots
$$
B_\mp(q) = X_1(q)+X_2(q)+X_3(q),
$$
but there are other interesting formulas for calculating it.
The coefficient $B_\mp(q)$ can be viewed as a ``modular invariant'' of the corresponding equation,
since the definition of $q$-irrationals 
is based on the invariance with respect to the action of the modular group $\PSL(2,\Z)$.
Note that, even in the simplest cases,~\eqref{NonaEq} and~\eqref{HeptaEq}, 
the coefficients~$B_-(q)$ and~$B_+(q)$
seem to be an {\it infinite Laurent series},
although we are unable to prove this.

We formulate the following conjecture.
\textit{No $q$-deformed cubic irrational can satisfy a cubic equation 
of order~$3$ with polynomial coefficients.}
This conjecture is based on the analysis of the coefficients~$B_-(q)$ and~$B_+(q)$
of the equantions~\eqref{NonaEq} and~\eqref{HeptaEq} and many other
computer experiments with cubic equations 
from the families~\eqref{Cubic1} and ~\eqref{Cubic2}, but not only.

The paper is organized as follows.
In Section~\ref{DefSec} we recall the definition of $q$-deformed rationals and irrationals.
We pay much attention to the action of the modular group $\PSL(2,\Z)$,
which is the main ingredient of our proof of Theorem~\ref{FirstThm}.
In Section~\ref{ProofSec} we prove our main results,
and give more formulas for computing the coefficients~$B_\mp(q)$.
In Section~\ref{HeptaSec} we consider the
cubic equations~\eqref{NonaEq} and~\eqref{HeptaEq}
and give some details of computer calculations in these cases. 

\section{$q$-rationals and $q$-irrationals, a compendium}\label{DefSec}

In this section we review the definition of 
$q$-rationals and $q$-irrationals and the $q$-deformed action of the modular group $\PSL(2,\Z)$.

\subsection{The modular group action and its $q$-deformation}
The modular group $\PSL(2,\Z)$ 
acts on the rational projective line $\mathbb{P}(Q)=\Q\cup\{\infty\}$ by linear-fractional transformations: 
$$
M\left(x\right)=\frac{ax+b}{cx+d},
$$
where $x\in\mathbb{P}(Q)$ and
$$
M=\begin{pmatrix}
a&b\\
c&d\end{pmatrix},
\qquad\qquad 
a,b,c,d\in\Z,
\quad
ad-bc=1.
$$
This action is homogeneous.
The group $\PSL(2,\Z)$ can be generated by two elements
$$
T=\begin{pmatrix}
1&1\\[4pt]
0&1
\end{pmatrix},
\quad
\quad
S=\begin{pmatrix}
0&-1\\[4pt]
1&0
\end{pmatrix}
$$
with the relations $S^2=(TS)^3=\Id$.
The corresponding linear-fractional transformations are
$T\left(x\right)=x+1$ and $S\left(x\right)=-1/x$.

Following~\cite{MGOfmsigma,LMGadv},
consider the $\PSL(2,\Z)$-action on the space
$\Z(q)$ of rational functions in one variable~$q$ with integer coefficients is generated by
\begin{equation}
\label{TqSq}
T_q=\begin{pmatrix}
q&1\\[4pt]
0&1
\end{pmatrix},
\quad
\quad
S_q=\begin{pmatrix}
0&-1\\[4pt]
q&0
\end{pmatrix}.
\end{equation}
Note that $T_q$ and $S_q$, viewed as linear-fractional transformations on~$\Z(q)$
$$
T_q\left(F(q)\right)=qF(q)+1,
\qquad\qquad
S_q\left(F(q)\right)=-\frac{1}{qF(q)},
$$
satisfy the same relations as $T$ and $S$,
namely 
\begin{equation}
\label{RelEq}
S_q^2=(T_qS_q)^3=\Id.
\end{equation}

For every element $M\in\PSL(2,\Z)$, its $q$-deformation,
which is an element $M_q$ of $\PGL(2,\Z[q,q^{-1}])$, is now well-defined; see~\cite{LMGadv}.
Indeed, if $M$ is a monomial in $T$ and $S$, then by definition
$M_q$ is the same monomial in $T_q$ and $S_q$.
The definition is correct, i.e independent of the choice of the monomial, since
$T_q$ and $S_q$ satisfy the same relations~\eqref{RelEq} as $T$ and $S$.

\subsection{$q$-rationals}
The $\PSL(2,\Z)$-invariant quantization of rational numbers is the unique map
$$
\left[\,.\,\right]_q:\Q\cup\{\infty\}\to\Z(q)
$$
commuting with the $\PSL(2,\Z)$-action and sending $0$ to~$0$.
The uniqueness follows from the transitivity of the $\PSL(2,\Z)$-action on $\mathbb{P}(Q)$,
and the existence is guaranteed by the explicit formula for $q$-rationals 
in terms of continued fractions; see~\cite{MGOfmsigma} (and also~\cite{Sik}).

The $\PSL(2,\Z)$-invariance can be expressed as two recurrent formulas
\begin{equation}
\label{RecEq}
\left[x+1\right]_q=q\left[x\right]_q+1,
\qquad\qquad
\left[-\frac{1}{x}\right]_q=-\frac{1}{q\left[x\right]_q},
\end{equation}
that imply the following properties.

\begin{itemize}
\item
If $n\in\N$, then the first formula~\eqref{RecEq} implies that 
$[n]_q$ is the cyclotomic polynomial
\begin{equation}
\label{QuantInt}
[n]_q=1+q+\cdots+q^{n-1}=\frac{1-q^n}{1-q},
\end{equation}
which is commonly  used in many areas of algebra, combinatorics and mathematical physics
as the $q$-analogue of a natural number.
Note that~\eqref{QuantInt} explains why the $q$-deformed generator $T_q$
is chosen as in~\eqref{TqSq}.
The choice of $S_q$ is then determined by the relations~\eqref{RelEq}.

\medskip
\item
Let~$x\geq0$ be a rational number,
written as a standard finite continued fraction expansion that,
without loss of generality, can be chosen of even length
$x=[a_1;a_2,a_3,\ldots,a_{2m}]$.
Explicitly
$$
x
\quad=\quad
a_1 + \cfrac{1}{a_2 
          + \cfrac{1}{\ddots +\cfrac{1}{a_{2m}} } },
          $$
where $a_i\geq1$ (except for $a_1\geq0$).
Then the $q$-deformation of~$x$ is the following rational function in~$q$
\begin{equation}
\label{QuantRat}
[x]_{q}\quad=\quad
[a_1]_{q} + \cfrac{q^{a_{1}}}{[a_2]_{q^{-1}} 
          + \cfrac{q^{-a_{2}}}{[a_{3}]_{q} 
          +\cfrac{q^{a_{3}}}{[a_{4}]_{q^{-1}}
          + \cfrac{q^{-a_{4}}}{
        \cfrac{\ddots}{[a_{2m-1}]_q+\cfrac{q^{a_{2m-1}}}{[a_{2m}]_{q^{-1}}}}}.
          } }} 
\end{equation}
Notice that the odd coefficients $a_{2i+1}$ appear in~\eqref{QuantRat} 
in the accordance with~\eqref{QuantInt}, 
the even coefficients $a_{2i}$ are also $q$-deformed accordingly~\eqref{QuantInt},
but the parameter of the deformation is inverted.
\end{itemize}

For~$\frac{n}{m}\in\Q$, the $q$-deformation 
is a rational function in~$q$ with integer coefficients
$$
\left[\frac{n}{m}\right]=\frac{N(q)}{M(q)},
$$
where $N(q)$ and $M(q)$ are coprime monic polynomials with positive integer
coefficients.
Let us mention that $q$-deformed rational numbers enjoy a number of
remarkable properties, such as unimodality of the sequences of coefficients of
the polynomials $N(q)$ and $M(q)$~\cite{McCSS,OgRa},
total positivity and connection to the Jones polynomial~\cite{MGOfmsigma,KW,Sik},
relation to discrete geometry~\cite{Oven},
Farey tessellation~\cite{LMGCom},
Bernoulli numbers and Thomae's function~\cite{Las}, and many others.

\subsection{$q$-irrationals}

The notion of $q$-deformed irrational numbers is based on the following ``stabilization phenomenon''.

\begin{thm}[\cite{MGOexp}]
\label{StabThm}
(i)
Let $x\in\R\setminus\Q$, and let $(x_n)_{n\in\N}$
be a sequence of rationals converging to~$x$.
Then the Taylor series of the rational functions $[x_n]_q$ centered at $q=0$
stabilize (i.e converge in the topology of formal power series) to some power series,~$[x]_q$.

(ii)
The series $[x]_q$ does not depend on the choice
of the sequence $(x_n)_{n\in\N}$, but only on~$x$.
\end{thm}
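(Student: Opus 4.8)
The plan is to prove both parts at once by passing to the regular continued fraction of $x$ and controlling everything through the $q$-adic valuation on $\Z((q))$. Write the infinite continued fraction $x=[a_1;a_2,a_3,\ldots]$ with convergents $x_n=[a_1;\ldots,a_n]=p_n/s_n$ in lowest terms, so that $x_n\to x$. Since any $q$-rational is a reduced fraction $[x_n]_q=N_n(q)/D_n(q)$ of coprime polynomials, I would first record the two structural facts I need from \eqref{RecEq}--\eqref{QuantRat}: that $D_n(0)=1$, so $D_n$ is a unit in $\Z[[q]]$ and $1/D_n\in\Z[[q]]$; and that, for $x\ge 0$, $[x_n]_q\in\Z[[q]]$. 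Both follow by induction on the length of the continued fraction from the recursions \eqref{RecEq}. For $x<0$ one reduces to the case $x\ge 0$ by a fixed finite shift $x\mapsto x+m$ via the first relation in \eqref{RecEq}, which does not affect stabilization.

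The analytic core is the estimate
$$
\mathrm{val}_q\!\big([x_n]_q-[x_{n-1}]_q\big)\longrightarrow\infty .
$$
To prove it I would use that consecutive convergents are Farey neighbours, $p_ns_{n-1}-p_{n-1}s_n=\pm1$, and that this identity $q$-deforms. Writing $[x_n]_q=M^{(n)}_q(\infty)$, where $M^{(n)}_q$ is the word in $T_q,S_q$ deforming the M\"obius map of the prefix $[a_1;\ldots,a_n]$, one has $\det M^{(n)}_q=\pm q^{L_n}$ with $L_n\to\infty$, because $\det T_q=\det S_q=q$ and the word length grows with $n$. Performing the analogous computation with the reduced fractions turns the classical neighbour identity into the \emph{$q$-determinant identity}
$$
N_n(q)D_{n-1}(q)-N_{n-1}(q)D_n(q)=\pm q^{\,v_n},\qquad v_n\to\infty ,
$$
a single monomial. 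Dividing by $D_nD_{n-1}$ and using that this product is a unit in $\Z[[q]]$ gives $[x_n]_q-[x_{n-1}]_q=\pm q^{\,v_n}/(D_nD_{n-1})\in q^{\,v_n}\Z[[q]]$. Hence the Taylor coefficients of $[x_n]_q$ at $q=0$ are eventually constant in $n$, their common limit defines a series $[x]_q\in\Z[[q]]$, and part~(i) holds for the convergent sequence.

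For part~(ii) and an arbitrary sequence $(y_m)$ of rationals with $y_m\to x$, I would invoke the standard fact that the reals whose first $k$ partial quotients equal $a_1,\ldots,a_k$ form an interval (a cylinder) containing the irrational $x$ in its interior and shrinking to $\{x\}$ as $k\to\infty$. Thus for each fixed $k$, all sufficiently large $y_m$ lie in this cylinder, so the first $k$ convergents of $y_m$ coincide with $x_1,\ldots,x_k$. Writing $[y_m]_q-[x_k]_q$ as the telescoping sum of consecutive-convergent differences of $y_m$ beyond index $k$, each of which lies in $q^{\,v}\Z[[q]]$ with $v\ge\min_{j>k}v_j\to\infty$ by the core estimate, I obtain $[y_m]_q\equiv[x_k]_q\pmod{q^{\,c_k}}$ with $c_k\to\infty$; combined with $[x_k]_q\equiv[x]_q\pmod{q^{\,c_k}}$ from the same telescoping for $x$, this yields $[y_m]_q\to[x]_q$, independent of the sequence.

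The main obstacle is the $q$-determinant identity together with $v_n\to\infty$. The delicate points are that one must work with the reduced fractions, so that the denominators are genuine units in $\Z[[q]]$ with constant term $1$, rather than with raw matrix entries, where the relation $S_q^{2}=-q\,\Id$ makes the determinant word-dependent; and one must check that the valuation gained from $\det M^{(n)}_q=\pm q^{L_n}$ is not cancelled by the valuations of the denominators arising in the M\"obius evaluation. Both are handled by tracking, inductively along the continued fraction, the exact $q$-valuation and constant term of $N_n$ and $D_n$; once this bookkeeping is in place, the stabilization and its independence of the approximating sequence follow formally, as above.
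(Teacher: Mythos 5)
The paper gives no proof of Theorem~\ref{StabThm} --- it is quoted from \cite{MGOexp} --- and the argument there is essentially the one you propose: stabilization along the convergents via the $q$-determinant identity $N_nD_{n-1}-N_{n-1}D_n=\pm q^{v_n}$ with $v_n=a_1+\cdots+a_{n-1}\to\infty$ (established in \cite{MGOfmsigma}), the fact that the denominators have constant term $1$ and are thus units in $\Z[[q]]$, and the cylinder argument to reduce an arbitrary approximating sequence to shared prefixes of continued fractions. Your proposal is correct, isolates the right key lemma, and correctly flags the two delicate points (reduced fractions versus raw matrix entries, and non-cancellation of the gained valuation).
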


Some properties of the series arising as $q$-deformed irrational numbers,
in particular their convergence radius, were studied in~\cite{LMGOV,RenB}.

\begin{rem}
If $x$ itself is a rational number, then the stabilization phenomenon has an amazing variant.
When $x$ is approximated by a sequence of rationals $(x_n)_{n\in\N}$ from the right,
the sequence of the rational functions $[x_n]_q$ converges to~$[x]_q$.
However, when $x$ is approximated by a sequence of rationals $(x_n)_{n\in\N}$ from the left,
the sequence of the rational functions $[x_n]_q$ converges to another rational function.
This was noticed in~\cite{MGOexp} and developed in~\cite{BBL},
leading to an intereesting notion of ``left $q$-rational''.
\end{rem}

In terms of the infinite  continued fraction expansion
$x=[a_1;a_2,a_3,a_4,\ldots]$, a useful explicit formula is
\begin{equation}
\label{QuantIrrat}
[x]_{q}\quad=\quad
[a_1]_{q} + \cfrac{q^{a_{1}}}{[a_2]_{q^{-1}} 
          + \cfrac{q^{-a_{2}}}{[a_{3}]_{q} 
          +\cfrac{q^{a_{3}}}{[a_{4}]_{q^{-1}}
          + \cfrac{q^{-a_{4}}}{
    \ddots}
          } }} 
\end{equation}
The stabilization phenomenon guarantees that cutting $n$ terms of
the infinite continued fraction~\eqref{QuantIrrat}
and developing the resulting rational function as a Taylor series,
more and more terms remain unchanged,
as~$n$ grows.

\subsection{Quadratic irrationals}
The $\PSL(2,\Z)$-invariance makes the situation of quadratic irrationals very simple; 
see~\cite{LMGadv}.

As commonly known, every quadratic irrational $x$ 
is a fixed point of some (hyperbolic) element $M$ of $\PSL(2,\Z)$,
that is, $M\left(x\right)=x$.
It follows from the $\PSL(2,\Z)$-invariance that
$$
M_q\big(\left[x\right]_q\big)= \left[x\right]_q.
$$
This implies that for $x=\frac{q+\sqrt{r}}{s}$ one has
$$
\left[x\right]_q=
\frac{Q(q)+\sqrt{R(q)}}{S(q)},
$$
where $Q(q),R(q),S(q)$ are some polynomials.
It was proved in~\cite{LMGadv}
that the polynomial $R(q)$ is palindromic (or ``reversal''), that is
$R(q^{-1})\deg(R)=R(q)$.
This follows from the palindromicity of the trace $\Tr(M_q)$, 
which is one of the main results of~\cite{LMGadv}.

Various combinatorial properties of $q$-deformed quadratic irrationals,
in particular relations to the Somos and Gale-Robinson sequences,
were studied in~\cite{OP}.

\subsection{Relation to the Burau representation}

An alternative understanding of the modular invariance, that provides a better understanding of 
the notion $q$-rationals, is based on the classical Burau representation of the braid group $\B_{3}$.

Let $\B_{3}$ be the braid group on three strands
(for the theory of braid groups and their applications; see~\cite{Bir}).
Let $\sigma_1,\sigma_2$ be the standard generators of~$\B_{3}$
satisfying the braid relation $\sigma_1\sigma_2\sigma_1=\sigma_2\sigma_1\sigma_2$.
The (reduced) Burau representation $\rho_3: \B_3 \to \GL(2, \Z[q,q^{-1}])$ is defined by
\begin{equation}
\label{rho3}
\begin{array}{rclrcl}
\rho_3\; : \quad \sigma_1 &\mapsto& 
\begin{pmatrix}
q&1\\[4pt]
0&1
\end{pmatrix}, & \sigma_2 &\mapsto& \begin{pmatrix}
1&0\\[4pt]
-q&q
\end{pmatrix},
\end{array}
\end{equation}
it is easy to check the braid relation
$$
\rho_3(\sigma_1)\rho_3(\sigma_2)\rho_3(\sigma_1)=
\rho_3(\sigma_2)\rho_3(\sigma_1)\rho_3(\sigma_2).
$$
Note that the centre $Z(\B_3) \subset \B_3$ acts trivially on $\Z(q)$ since 
$\rho_3((\s_1\s_2)^3)=-q^3 \Id$ is a scalar matrix, so that the effective
``projectivized Burau action'' is that of 
$$
\B_3/Z(\B_3)\cong\PSL(2,\Z).
$$
Furthermore, this action coincides with the action defined by~\eqref{TqSq}
since $\rho_3(\sigma_1)=T_q$ and $\rho_3(\sigma_2)=S_qT_qS_q$.

This connection between $q$-rationals and the Burau representation
proved to be useful in~\cite{MGOV}.
This viewpoint was extended in~\cite{Perrine} for the action of $\B_4$
on the projective plane $\mathbb{P}^2(\Q)$.

\section{Proof of Theorem~\ref{FirstThm}}\label{ProofSec}

In this section we prove our main result,
and reformulate it in terms of Vieta's formulas.
The main ingredient of the proof is $\PSL(2,\Z)$-invariance.
We also give another way to calculate the coefficient~$B(q)$.

\subsection{The Galois group action}

The Galois group of~\eqref{Cubic1} and~\eqref{Cubic2} is
isomorphic to the cyclic group of order~$3$.
This is equivalent to the fact that the discriminant of both equations
is a rational square.
However, these equations has even stronger property:
the action of the Galois group is realized as a third order linear fractional
transformation.
More precisely, we have the following.
\begin{lem}
\label{InvLem}
(i) 
The  equation~\eqref{Cubic1} is invariant under the linear-fractional transformation
\begin{equation}
\label{LF1}
x\mapsto\frac{x-1}{x}.
\end{equation}

(ii)
The  equation~\eqref{Cubic2} is invariant under the linear-fractional transformation
\begin{equation}
\label{LF2}
x\mapsto-\frac{x+1}{x}.
\end{equation}

\end{lem}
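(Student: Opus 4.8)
The plan is to verify directly that each cubic equation is invariant under the stated linear-fractional transformation, using the elementary observation that a degree-$3$ polynomial $P(x)=x^3-bx^2+(b-3)x+1$ is invariant (up to a nonzero scalar) under $x\mapsto\phi(x)=\frac{x-1}{x}$ precisely when the three roots are permuted cyclically by $\phi$. Rather than tracking roots, I would work at the level of polynomials. Write $\phi(x)=\frac{x-1}{x}$ for part~(i). The key computation is to substitute $y=\phi(x)$ into the cubic and clear denominators: I expect that $P(\phi(x))$, once multiplied by $x^3$, returns a scalar multiple of $P(x)$ itself. Concretely, I would compute
\begin{equation}
x^3\,P\!\left(\tfrac{x-1}{x}\right)=(x-1)^3-b\,(x-1)^2x+(b-3)(x-1)x^2+x^3,
\end{equation}
expand, collect powers of $x$, and check that the result equals $\pm P(x)$ (I anticipate it equals $+P(x)$, since $\phi$ is orientation-preserving of order three and fixes the leading and constant coefficients appropriately). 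This is a routine but decisive polynomial identity.

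For part~(ii) the argument is structurally identical with $\psi(x)=-\frac{x+1}{x}$ and the polynomial $Q(x)=x^3-bx^2-(b+3)x-1$. I would again form $x^3\,Q\!\left(-\tfrac{x+1}{x}\right)$, expand the numerator
\begin{equation}
-(x+1)^3-b\,(x+1)^2x+(b+3)(x+1)x^2-x^3,
\end{equation}
and confirm that it reproduces a scalar multiple of $Q(x)$. The two transformations $\phi$ and $\psi$ are the natural order-three candidates: one checks from~\eqref{LF1} that $\phi$ corresponds to the matrix $\begin{pmatrix}1&-1\\1&0\end{pmatrix}$, which satisfies $\phi^3=\id$ in $\PSL(2,\Z)$, and similarly~\eqref{LF2} corresponds to $\begin{pmatrix}-1&-1\\1&0\end{pmatrix}$, again of order three. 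Verifying $\phi^3=\id$ and $\psi^3=\id$ at the matrix level is a quick side check that confirms these are genuine realizations of the cyclic Galois group $C_3$ as announced before the lemma.

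The main obstacle is essentially bookkeeping rather than conceptual: the expansions produce several terms in $b$ and in the powers of $x$, and one must verify that the coefficient of each monomial matches after accounting for the overall scalar. I would organize this by grouping the $b$-dependent and $b$-independent parts separately, since the invariance must hold identically in the parameter $b$; this splits each verification into two smaller identities and makes the cancellations transparent. There is no deep difficulty to overcome—the lemma is purely computational—but care is needed with signs, particularly in part~(ii) where the extra minus sign in $\psi$ interacts with the odd-degree terms. Once both polynomial identities are confirmed, invariance of the equations under~\eqref{LF1} and~\eqref{LF2} follows immediately, and with it the fact that the respective transformations permute the three real roots, realizing the $C_3$ Galois action as a third-order element of $\PSL(2,\Z)$.
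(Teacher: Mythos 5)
Your proposal is correct and takes essentially the same approach as the paper, whose entire proof is ``This is a straightforward computation''---your direct substitution-and-clear-denominators verification is exactly that computation, spelled out. One small correction: in part~(i) the expression $x^3\,P\!\left(\tfrac{x-1}{x}\right)$ works out to $-P(x)$ rather than $+P(x)$ (the $x^3$-coefficient is $1-b+(b-3)+1=-1$), which your $\pm$ hedge already covers and which does not affect the invariance of the equation.
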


\begin{proof}
This is a straightforward computation.
\end{proof}

Note that the linear-fractional transformations~\eqref{LF1} and~\eqref{LF2} 
correspond to the action of the following third order elements of~$\PSL(2,\Z)$
$$
TS=
\begin{pmatrix}
1&-1\\[4pt]
1&0
\end{pmatrix}
\qquad\hbox{and}\qquad
T^{-1}S=
\begin{pmatrix}
1&1\\[4pt]
-1&0
\end{pmatrix},
$$
respectively.

This invariance implies that the transformations~\eqref{LF1} and~\eqref{LF2}
act on the roots of the equations.
Since they are third order maps, they have to cyclically permute the roots.
We obtain the following.

\begin{cor}
\label{CorLem}
(i) 
Up to renumeration, the roots of
the  equation~\eqref{Cubic1} are related to each other by the following relations
\begin{equation}
\label{QRel1}
x_1=\frac{x_2-1}{x_2},
\qquad
x_2=\frac{x_3-1}{x_3},
\qquad
x_3=\frac{x_1-1}{x_1}.
\end{equation}

(ii) 
Up to renumeration, the roots of
the  equation~\eqref{Cubic2} are related to each other by the following relations
\begin{equation}
\label{QRel2}
x_1=-\frac{x_2+1}{x_2},
\qquad
x_2=-\frac{x_3+1}{x_3},
\qquad
x_3=-\frac{x_1+1}{x_1}.
\end{equation}
\end{cor}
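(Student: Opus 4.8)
The plan is to read off the corollary from Lemma~\ref{InvLem} by a short argument about how an order-three symmetry must act on a three-element set. Write $\varphi$ for the linear-fractional transformation~\eqref{LF1} in case~(i), and for~\eqref{LF2} in case~(ii). First I would observe that each root of~\eqref{Cubic1} (resp.~\eqref{Cubic2}) is nonzero, since substituting $x=0$ gives the nonzero constant terms $1$ and $-1$; hence $\varphi$ is defined and finite at every root. By Lemma~\ref{InvLem} the cubic is preserved by $\varphi$, so $x$ a root implies $\varphi(x)$ a root, and since $\varphi$ is a M\"obius transformation it is injective. Therefore $\varphi$ induces a permutation $\pi$ of the three-element set $\{x_1,x_2,x_3\}$.

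The next step is to use that $\varphi$ has order three. The transformations~\eqref{LF1} and~\eqref{LF2} are induced by $TS$ and $T^{-1}S$, and both are third-order elements of $\PSL(2,\Z)$: indeed $(TS)^3=\Id$ is one of the defining relations, and $(T^{-1}S)^3=\Id$ follows as well. Thus $\varphi^3=\id$ as a map, and therefore $\pi^3=e$ in the symmetric group $S_3$. Consequently $\pi$ is either the identity or one of the two $3$-cycles.

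The crux, and the only point that requires an actual computation, is to exclude $\pi=e$, i.e. to show that $\varphi$ fixes none of the (real) roots. Here I would use that $TS$ and $T^{-1}S$ both have trace $1$, of absolute value less than $2$, so the associated transformations are elliptic: their fixed points satisfy $x^2-x+1=0$ and $x^2+x+1=0$ respectively, and are therefore the primitive sixth, resp. cube, roots of unity, in particular non-real. Since the \emph{casus irreducibilis} hypothesis guarantees that $x_1,x_2,x_3$ are real and distinct, none of them is fixed by $\varphi$. Hence $\pi$ has no fixed point and must be a $3$-cycle.

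Finally, relabelling the roots so that $\varphi(x_2)=x_1$, $\varphi(x_3)=x_2$, and $\varphi(x_1)=x_3$, and substituting the explicit form of $\varphi$ from~\eqref{LF1} (resp.~\eqref{LF2}), yields exactly the relations~\eqref{QRel1} (resp.~\eqref{QRel2}). The main obstacle is thus the mild one of ruling out the trivial permutation; everything else is formal. I expect no genuine difficulty, since the elliptic, hence non-real, nature of the fixed points cleanly separates them from the real root set forced by the \emph{casus irreducibilis} assumption.
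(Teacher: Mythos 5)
Your proof is correct and follows essentially the same route as the paper, which derives the corollary from Lemma~\ref{InvLem} by observing that the order-three linear-fractional transformation permutes the roots and must therefore act as a $3$-cycle. Your extra step of excluding a fixed real root — via the ellipticity of $TS$ and $T^{-1}S$, whose fixed points are non-real — makes explicit a point the paper leaves implicit, but it is the same argument.
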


The roots of the polynomials~\eqref{Cubic1} and~\eqref{Cubic2} are therefore
$\PSL(2,\Z)$-equivalent numbers.

\begin{rem}
(1)
It is well-known that the elements $TS$ and $T^{-1}S$ are the only,
up to conjugation, third order elements of~$\PSL(2,\Z)$.
These are representatives of two conjugacy classes of~$\PSL(2,\Z)$ with trace~$1$
(see, e.g~\cite{Cho}).
Therefore, \eqref{Cubic1} and~\eqref{Cubic2} are the only, 
up to $\PSL(2,\Z)$-equivalence, cubic equations
whose roots are $\PSL(2,\Z)$-equivalent to each other.

(2)
Note also that the property that the discriminant of a cubic equation is a rational square
does not imply that the roots are $\PSL(2,\Z)$-equivalent.
For instance, the discriminant of the polynomial $x^3-12x+8$ is equal to~$72^2$,
and the polynomial $ x^3-13x+13$ has discriminant $65^2$,
but the roots are not equivalent for different reasons. 
The roots of the first polynomial, $x^3-12x+8$, are $x_k=4\cos \left(\frac{2\pi k}{9 }\right)$ 
(where~$k=1,2,3$) such that 
$$
x_{1}=\frac{-x_2+2}{2x_2 },
\qquad x_{2}=\frac{-x_3+2}{2x_3 },
\qquad x_{3}=\frac{-x_1+2}{2x_1 }.
$$
They do not belong to the same $\PSL(2,\mathbb{Z})$-orbit because the matrix
$\bigl(\begin{smallmatrix} -1&2\\
\;2&0 \end{smallmatrix}\bigr)$
does not belong to $\PSL(2,\mathbb{Z})$. 
The second polynomial, $x^3-13x+13$, has the roots 
$$
x_k=2\sqrt{\frac{13}{3 }}
\cos \left(\frac{1}{ 3}\arccos\left(-\frac{3}{2}\sqrt{\frac{3}{13 }} \right)
+\frac{2\pi k}{3}\right)\qquad (k=1,2,3)
$$ 
which are not $\PSL(2,\Z)$-equivalent because they are not connected by any linear-fractional transformation:
$$
x_{1,2}=-\frac{x_3}{2 }\pm \sqrt{13-\frac{3x_3^2}{ 4}}.
$$
\end{rem}

\subsection{The $q$-deformed Galois group action}
The $q$-deformations of the elements $TS$ and $T^{-1}S$
are given by
$$
T_qS_q=
\begin{pmatrix}
1&-1\\[4pt]
1&0
\end{pmatrix}
\qquad\hbox{and}\qquad
T_q^{-1}S_q=
\begin{pmatrix}
q&1\\[4pt]
-q^2&0
\end{pmatrix}.
$$
Therefore, the property of $\PSL(2,\Z)$-invariance of $q$-deformed irrationals implies 
the next statement.

\begin{prop}
\label{TransProp}
The $q$-deformed roots 
$X_1(q)=\left[x_1\right]_q,X_2(q)=\left[x_2\right]_q,X_3(q)=\left[x_3\right]_q$
has to be related by the following $q$-analogues of~\eqref{QRel1} and~\eqref{QRel2}:
\begin{equation}
\label{QRel1Bis}
X_1(q)=\frac{X_2(q)-1}{X_2(q)},
\qquad
X_2(q)=\frac{X_3(q)-1}{X_3(q)},
\qquad
X_3(q)=\frac{X_1(q)-1}{X_1(q)},
\end{equation}
for~\eqref{Cubic1}, and
\begin{equation}
\label{QRel2Bis}
X_1(q)=-\frac{qX_2(q)+1}{q^2X_2(q)},
\qquad
X_2(q)=-\frac{qX_3(q)+1}{q^2X_3(q)},
\qquad
X_3(q)=-\frac{qX_1(q)+1}{q^2X_1(q)},
\end{equation}
for~\eqref{Cubic2}.
\end{prop}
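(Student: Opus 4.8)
The plan is to deduce Proposition~\ref{TransProp} directly from the $\PSL(2,\Z)$-invariance of the $q$-deformation map, exactly as in the classical Corollary~\ref{CorLem}, but now tracking the $q$-deformed matrices. The essential point is that the map $\left[\,.\,\right]_q$ intertwines the ordinary $\PSL(2,\Z)$-action on $\R$ with the $q$-deformed action on $\Z(q)$ generated by $T_q$ and $S_q$. Concretely, for any $M\in\PSL(2,\Z)$ and any real $x$ one has $\left[M(x)\right]_q = M_q\big(\left[x\right]_q\big)$, where $M_q$ is the corresponding monomial in $T_q,S_q$ acting as a linear-fractional transformation. This is the defining property recorded in~\eqref{RecEq} and its consequence for general monomials; I would simply invoke it.

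First I would fix the relevant group elements. For~\eqref{Cubic1} the classical relation~\eqref{QRel1} is the action of $TS=\bigl(\begin{smallmatrix}1&-1\\1&0\end{smallmatrix}\bigr)$, sending $x\mapsto\frac{x-1}{x}$, and by Corollary~\ref{CorLem}(i) we have $x_1=TS(x_2)$, and cyclically. For~\eqref{Cubic2} the relation~\eqref{QRel2} is the action of $T^{-1}S=\bigl(\begin{smallmatrix}1&1\\-1&0\end{smallmatrix}\bigr)$, sending $x\mapsto-\frac{x+1}{x}$, with $x_1=T^{-1}S(x_2)$, and cyclically. These are given in the excerpt immediately after Lemma~\ref{InvLem}.

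Next I would apply invariance term by term. For the first family, since $x_1 = TS(x_2)$, invariance gives $X_1(q)=\left[x_1\right]_q=(T_qS_q)\big(\left[x_2\right]_q\big)=(T_qS_q)\big(X_2(q)\big)$. Reading off the stated matrix $T_qS_q=\bigl(\begin{smallmatrix}1&-1\\1&0\end{smallmatrix}\bigr)$ as a linear-fractional transformation gives precisely $X_1(q)=\frac{X_2(q)-1}{X_2(q)}$, and the two cyclic relations follow identically. For the second family, $x_1=T^{-1}S(x_2)$ yields $X_1(q)=(T_q^{-1}S_q)\big(X_2(q)\big)$; substituting the stated matrix $T_q^{-1}S_q=\bigl(\begin{smallmatrix}q&1\\-q^2&0\end{smallmatrix}\bigr)$ gives $X_1(q)=\frac{qX_2(q)+1}{-q^2X_2(q)}=-\frac{qX_2(q)+1}{q^2X_2(q)}$, which is exactly~\eqref{QRel2Bis}, and again the cyclic relations are the same computation applied to the other roots.

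The only nontrivial inputs are (a) that the intertwining property $\left[M(x)\right]_q=M_q\big(\left[x\right]_q\big)$ holds not just for rationals but for irrational $x$ arising as $q$-deformed algebraic numbers, and (b) the explicit $q$-deformed matrices $T_qS_q$ and $T_q^{-1}S_q$. Point (b) is a one-line matrix product from~\eqref{TqSq} and was already supplied just before the proposition. Point (a) is where I expect the main obstacle to lie, since the excerpt states invariance primarily for $q$-rationals; I would justify it by the stabilization phenomenon (Theorem~\ref{StabThm}), passing the rational identity $\left[M(x_n)\right]_q=M_q\big(\left[x_n\right]_q\big)$ to the limit along a sequence $x_n\to x$ in the formal-power-series topology, using that linear-fractional transformations with coefficients in $\Z[q,q^{-1}]$ are continuous on this topology away from the poles. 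Granting this continuity, the proposition is immediate.
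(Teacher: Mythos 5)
Your proposal is correct and follows essentially the same route as the paper: the paper computes the projective matrices $T_qS_q=\bigl(\begin{smallmatrix}1&-1\\ 1&0\end{smallmatrix}\bigr)$ and $T_q^{-1}S_q=\bigl(\begin{smallmatrix}q&1\\ -q^2&0\end{smallmatrix}\bigr)$ and then simply invokes the $\PSL(2,\Z)$-invariance of $q$-deformed irrationals to read off~\eqref{QRel1Bis} and~\eqref{QRel2Bis}, exactly as you do. Your additional justification of the intertwining property for irrationals via the stabilization phenomenon of Theorem~\ref{StabThm} is a point the paper leaves implicit (deferring to~\cite{MGOexp,LMGadv}), and it is a legitimate way to fill that gap.
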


\subsection{Vieta's formulas}

Let us now prove a statement 
which is an equivalent reformulation of Theorem~\ref{FirstThm}.

\begin{thm}
\label{VietaThm}
(i)
The $q$-deformations of the roots of~\eqref{Cubic1} 
satisfy the Vieta type equations
\begin{equation}
\label{Vieta1}
\begin{array}{rcl}
X_1(q)X_2(q)X_3(q)&=&-1,\\[4pt]
X_1(q)X_2(q)+X_2(q)X_3(q)+X_3(q)X_1(q)&=&X_1(q)+X_2(q)+X_3(q)-3.
\end{array}
\end{equation}

(ii)
The $q$-deformations of the roots of~\eqref{Cubic2} 
satisfy the Vieta type equations
\begin{equation}
\label{Vieta2}
\begin{array}{rcl}
X_1(q)X_2(q)X_3(q)&=&q^{-3},\\[4pt]
X_1(q)X_2(q)+X_2(q)X_3(q)+X_3(q)X_1(q)&=&
q^{-1}\left(X_1(q)+X_2(q)+X_3(q)\right)-3q^{-2}.
\end{array}
\end{equation}
\end{thm}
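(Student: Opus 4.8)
The plan is to derive both sets of Vieta-type identities \eqref{Vieta1} and \eqref{Vieta2} directly from the $q$-deformed cyclic relations of Proposition~\ref{TransProp}, treating $X_1,X_2,X_3$ as formal Laurent series satisfying \eqref{QRel1Bis} or \eqref{QRel2Bis} respectively, and never invoking the undeformed equations \eqref{Cubic1} or \eqref{Cubic2} except as a sanity check. The key observation is that the three relations in each case are cyclic, so the product $X_1 X_2 X_3$ and the elementary symmetric combinations should telescope cleanly.

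\textbf{Proof of (i).} For the product, I would multiply the three relations in \eqref{QRel1Bis} together. Writing $X_i(q)=X_i$ for brevity, the relation $X_3=\frac{X_1-1}{X_1}$ gives $X_3 X_1 = X_1-1$, and cyclically $X_1 X_2 = X_2-1$ and $X_2 X_3 = X_3-1$. Taking the product of the three left-hand sides yields $(X_1 X_2 X_3)^2$, while the product of the right-hand sides is $(X_1-1)(X_2-1)(X_3-1)$. Expanding the latter and using the three pairwise identities to eliminate each product $X_i X_j$ in favor of a single variable should collapse everything to a single relation in $X_1 X_2 X_3$, from which $X_1 X_2 X_3 = -1$ follows after checking the sign against the real roots of \eqref{Cubic1}, whose product is $-d/a=-1$. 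For the second identity, I would add the three pairwise relations $X_1 X_2 = X_2-1$, $X_2 X_3 = X_3-1$, $X_3 X_1 = X_1-1$; the sum of the left-hand sides is exactly $X_1 X_2 + X_2 X_3 + X_3 X_1$, and the sum of the right-hand sides is $(X_1+X_2+X_3)-3$, giving the second line of \eqref{Vieta1} immediately.

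\textbf{Proof of (ii).} The same strategy applies to \eqref{QRel2Bis}, but the bookkeeping is heavier because of the powers of $q$. From $X_3=-\frac{qX_1+1}{q^2 X_1}$ I get $q^2 X_3 X_1 = -(qX_1+1)$, and cyclically. For the product, multiplying the three relations gives $q^6 (X_1 X_2 X_3)^2 = -(qX_1+1)(qX_2+1)(qX_3+1)$; I would expand the right-hand side and substitute the pairwise relations $q^2 X_i X_j = -(qX_j+1)$ to reduce it, aiming to show $X_1 X_2 X_3 = q^{-3}$, consistent with the constant-to-leading ratio $-(-1)/1$ scaled by the determinant of $T_q^{-1}S_q$. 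For the symmetric combination, summing the three rewritten relations $q^2 X_i X_j = -q X_j - 1$ over the cyclic indices yields $q^2(X_1 X_2+X_2 X_3+X_3 X_1) = -q(X_1+X_2+X_3)-3$, and dividing by $q^2$ produces exactly the second line of \eqref{Vieta2}.

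The main obstacle I anticipate is pinning down the products $X_1 X_2 X_3$ with the correct sign and $q$-power, since the squaring introduced by multiplying all three relations means each formal computation determines $(X_1 X_2 X_3)^2$ rather than the product itself. I would resolve the ambiguity by evaluating the leading term of each Laurent series: the $q$-deformation $[x]_q$ has a known lowest-order behavior determined by the continued fraction expansion via \eqref{QuantIrrat}, so computing $\mathrm{val}(X_1 X_2 X_3)$ and the sign of its leading coefficient fixes the correct branch. Once the product is determined, the symmetric-sum identities are genuinely elementary and require no such care, since they follow from a single linear summation of the three cyclic relations.
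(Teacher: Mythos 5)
Your proposal is correct in substance and rests on exactly the same foundation as the paper's proof, namely the $q$-deformed cyclic relations of Proposition~\ref{TransProp}; the difference is in how the product identity is extracted, and your route creates an avoidable complication. By multiplying all three relations you only determine a quadratic satisfied by $P=X_1X_2X_3$: in case (i) the pairwise identities $X_1X_2=X_2-1$, $X_2X_3=X_3-1$, $X_3X_1=X_1-1$ turn $(X_1-1)(X_2-1)(X_3-1)=P^2$ into $P^2=P+2$, so $P\in\{2,-1\}$, and similarly $q^3e_3\in\{1,-2\}$ in case (ii). Appealing to the classical product $-d/a=-1$ does not by itself select the branch (these Laurent series need not specialize at $q=1$); your leading-coefficient plan does work in principle, since $P$ is forced to be one of two explicit constants and a single coefficient decides, but carrying it out for arbitrary $b$ means controlling the low-order terms of $[x_i]_q$ for roots whose signs and integer parts vary with $b$ --- real extra work you have not done. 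The paper sidesteps all of this by chaining two relations instead of multiplying three: $X_1X_2X_3=(X_1X_2)X_3=(X_2-1)X_3=X_2X_3-X_3=(X_3-1)-X_3=-1$, with no branch to choose; you should replace the squaring step by this telescoping. For the second identity your argument is actually cleaner than the paper's: summing the three pairwise relations gives $e_2=e_1-3$ directly, whereas the paper deduces it by expanding $(X_1-1)(X_2-1)(X_3-1)=(X_1X_2X_3)^2=1$.

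One further point in case (ii): your summation gives $q^2e_2=-qe_1-3$, i.e.\ $e_2=-q^{-1}e_1-3q^{-2}$, which is \emph{not} literally the second line of \eqref{Vieta2} as printed (that line has $+q^{-1}e_1$). Your sign is the correct one: it is the identity encoded by the coefficient of $X$ in \eqref{Cubic2Quant}, and it checks against the displayed heptagon series, where $X_1X_2+X_2X_3+X_3X_1=q^{-3}-q^{-2}-2q^{-1}+1+\cdots=-q^{-1}B_+(q)-3q^{-2}$. The discrepancy is therefore a sign typo in the statement of \eqref{Vieta2}, not an error in your computation --- but you should flag it rather than assert agreement with a formula your own derivation contradicts.
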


\begin{proof}
Part (i). 
The second and the third formulas~\eqref{QRel1Bis} imply
$$
X_2(q)=-\frac{1}{X_1(q)-1}.
$$
Therefore,
$$
X_1(q)X_2(q)X_3(q)=-X_1(q)\frac{1}{X_1(q)-1}\frac{X_1(q)-1}{X_1(q)}=-1,
$$
hence the first formula~\eqref{Vieta1}.
One again applying~\eqref{QRel1Bis}, we have
$$
\left(X_1(q)-1\right)\left(X_2(q)-1\right)\left(X_3(q)-1\right)=
\left(X_1(q)X_2(q)X_3(q)\right)^2=1,
$$
which readily gives the second formula~\eqref{Vieta1}.

Part (ii) is a similar consequence of~\eqref{QRel2Bis}
and we omit the details.
\end{proof}

Theorem~\ref{FirstThm} is obviously equivalent to Theorem~\ref{VietaThm}.
Indeed, rewriting the $q$-deformed equation in terms of its roots
$$
\left(X-X_1(q)\right)\left(X-X_2(q)\right)\left(X-X_3(q)\right)=0,
$$
and applying~\eqref{Vieta1} and~\eqref{Vieta2}, 
we immediately have~\eqref{Cubic1Quant}
and~\eqref{Cubic2Quant}, respectively.
Theorem~\ref{FirstThm} is proved.

\begin{rem}
To conclude this discussion, one can say that two of three 
Vieta's formulas of~\eqref{Cubic1} and~\eqref{Cubic2} are preserved by the $q$-deformation.
Computer experiments convinced us that
this phenomenon 
is true uniquely for the equations that belong to the families~\eqref{Cubic1} and~\eqref{Cubic2}.
It is a challenging problem to figure out if something similar holds for any other cubic equation.
\end{rem}

\subsection{More about the coefficients $B_\mp(q)$}

Let us discuss the nature of the coefficients $B_-(q)$ and~$B_-(q)$
of the equations~\eqref{Cubic1Quant} and~\eqref{Cubic2Quant}.
Besides the obvious expression $B_\mp(q) = X_1(q)+X_2(q)+X_3(q)$,
there is another way to calculate it.

For convenience, let us use the following notation.
Looking at the polynomial~\eqref{Cubic1} as an operator, we write 
\begin{equation}
\label{E1}
\E_-\left(X\right):=
\frac{X^3-bX+\left(b-3\right)X+1}{X\left(X-1\right)}.
\end{equation}
Clearly, $\E_-\left(x_1\right)=\E_-\left(x_2\right)=\E_-\left(x_3\right)=0$,
but after the $q$-deformation this is no longer the case, 
$\E_-\left(X_i(q)\right)\not=0$, for~$i=1,2,3$.
However, it turns out surprisingly that these three expressions coincide.

In the case~\eqref{Cubic2}, we write
\begin{equation}
\label{E2}
\E_+\left(X\right):=
\frac{X^3 - bX^2 - \left(q^{-1}b+3q^{-2}\right)X - q^{-3}}{X\left(X+q^{-1}\right)}.
\end{equation}
One again, $\E_+\left(X_i(q)\right)\not=0$, but these expressions
turns out to coincide, for~$i=1,2,3$.

More precisely, we have the following.

\begin{prop}
\label{BProp}
The solutions of the equation~\eqref{Cubic1Quant} and~\eqref{Cubic2Quant} satisfy
\begin{equation}
\label{Defect1}
\E_\mp\left(X_1(q)\right)=
\E_\mp\left(X_2(q)\right)=
\E_\mp\left(X_3(q)\right)\;=\;
B_\mp(q)-b,
\end{equation}
where $\E_\mp$ is as in~\eqref{E1} and~\eqref{E2}, respectively.
\end{prop}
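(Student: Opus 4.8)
The plan is to use Theorem~\ref{FirstThm}, which guarantees that each $q$-deformed root $X_i(q)$ is a genuine root of the quantum cubic~\eqref{Cubic1Quant} (resp.~\eqref{Cubic2Quant}), the leading subcoefficient being $B_\mp(q)=X_1(q)+X_2(q)+X_3(q)$. The key observation is that the numerator of $\E_\mp(X)$ is itself a cubic in $X$ of exactly the same shape as the quantum cubic, but carrying the \emph{integer} parameter $b$ in place of the series $B_\mp(q)$. Since $X_i(q)$ annihilates the quantum cubic, I can reduce the numerator of $\E_\mp(X_i)$ modulo that relation and expect the remainder to become divisible by the denominator.

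Concretely, first I would subtract the quantum cubic~\eqref{Cubic1Quant}, evaluated at $X=X_i(q)$ (where it vanishes), from the numerator of $\E_-(X_i)$. The cubic terms $X_i^3$ cancel, as do the constant terms $+1$, and what survives is a quadratic in $X_i$ whose $X_i^2$-coefficient is $B_-(q)-b$ and whose linear coefficient is $-(B_-(q)-b)$. This quadratic factors as $(B_-(q)-b)\,X_i(X_i-1)$, which is precisely $(B_-(q)-b)$ times the denominator $X_i(X_i-1)$ appearing in~\eqref{E1}.

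Dividing by the denominator then gives $\E_-(X_i(q))=B_-(q)-b$ for each $i=1,2,3$, and since the right-hand side does not depend on $i$, the three values coincide; this settles the minus case of~\eqref{Defect1}. For the plus case I would run the identical argument with~\eqref{Cubic2Quant} and~\eqref{E2}: subtracting the quantum cubic kills the $X_i^3$ term and the constant $-q^{-3}$, and the surviving quadratic is $(B_+(q)-b)\,(X_i^2+q^{-1}X_i)=(B_+(q)-b)\,X_i(X_i+q^{-1})$, again exactly $(B_+(q)-b)$ times the denominator in~\eqref{E2}, so that $\E_+(X_i(q))=B_+(q)-b$.

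I do not anticipate a serious obstacle: the whole argument reduces to the elementary remark that the difference of the two cubics — the one defining $\E_\mp$ (parameter $b$) and the quantum cubic (parameter $B_\mp(q)$) — is divisible, as a polynomial in $X$, by the denominator $X(X-1)$ (resp.\ $X(X+q^{-1})$). Once this divisibility is recorded the conclusion is immediate. The only point demanding mild care is the bookkeeping of the shifted constant terms and the powers of $q$ in the plus case, but no genuine difficulty arises there.
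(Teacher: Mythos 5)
Your argument is correct, and it reaches the conclusion by a route that differs from the paper's. You start from Theorem~\ref{FirstThm}: since $X_i(q)$ kills the quantum cubic \eqref{Cubic1Quant} (resp.\ \eqref{Cubic2Quant}), subtracting that cubic from the numerator of $\E_\mp(X_i)$ leaves exactly $\bigl(B_\mp(q)-b\bigr)X_i(X_i-1)$, resp.\ $\bigl(B_+(q)-b\bigr)X_i(X_i+q^{-1})$, i.e.\ the denominator of \eqref{E1}, resp.\ \eqref{E2}, times $B_\mp(q)-b$; your coefficient bookkeeping in both cases checks out, and the division is legitimate since the $X_i(q)$ are $q$-deformations of irrationals, so none of $X_i$, $X_i-1$, $X_i+q^{-1}$ vanishes. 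The paper instead bypasses Theorem~\ref{FirstThm} and goes back to the cyclic relations of Proposition~\ref{TransProp}: it substitutes $X_2=-1/(X_1-1)$ and $X_3=(X_1-1)/X_1$ (and their $q$-shifted analogues in the plus case) into $B_\mp(q)=X_1+X_2+X_3$ to get the closed form $B_-(q)=\bigl(X_1^3-3X_1+1\bigr)/\bigl(X_1(X_1-1)\bigr)$, from which \eqref{Defect1} follows on subtracting $b$. Your version isolates the purely formal fact that two cubics of the same shape with parameters $b$ and $B_\mp(q)$ differ by $(B_\mp(q)-b)$ times the denominator of $\E_\mp$, which makes the independence of $i$ transparent; the paper's version additionally produces an expression for $B_\mp(q)$ as a rational function of a single root, which is the ``other way to calculate it'' the authors are advertising in that subsection. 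Both are complete proofs; there is no circularity in your use of Theorem~\ref{FirstThm}, which is established before this proposition.
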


\begin{proof}
Starting from $B_-(q) = X_1(q)+X_2(q)+X_3(q)$ and
applying~\eqref{QRel1Bis}, we have
$$
B_-(q)=
X_1(q)-\frac{1}{X_1(q)-1}+\frac{X_1(q)-1}{X_1(q)}=
\frac{X_1(q)^3-3X_1(q)+1}{X_1(q)\left(X_1(q)-1\right)},
$$
and similarly for $X_2(q)$ and $X_3(q)$.
Subtracting $b$ gives the first case of~\eqref{Defect1}.

Similarly, applying~\eqref{QRel2Bis} we have
$$
B_+(q)=
X_1(q)-\frac{1}{q^2X_1(q)+q}-\frac{qX_1(q)+1}{q^2X_1(q)}=
\frac{X_1(q)^3-3q^{-2}X_1(q)-q^{-3}}{X_1(q)\left(X_1(q)+q^{-1}\right)},
$$
as well as for $X_2(q)$ and $X_3(q)$.
One again, subtracting~$b$ we obtain the second case of~\eqref{Defect1}.
\end{proof}

\section{Main examples}\label{HeptaSec}

In this final section, we give more details for the $q$-deformed equations~\eqref{HeptaEq}
and~\eqref{NonaEq}.
Our goal is to analyze the coefficient~$B_\mp(q)$ in this case.

Let us also mention that another surprising appearance of
the regular heptagon in the context of $q$-deformations
can be found in~\cite{Hig} (see Example~5.7).

\subsection{The ``quantum heptagon''}

Let us give more details in the case of the 
$q$-deformed equation~\eqref{HeptaEq} with the roots~\eqref{HeptaRoots}.
Vieta's formulas~\eqref{Vieta2} can now be checked with the computer.
After a computation, the $q$-deformed roots are the following Laurent series in~$q$
$$
\begin{array}{rcl}
X_1(q) &=&
1+q^5 -q^7 -q^8 +2q^{10} +2q^{11} -q^{12} -4q^{13} 
-3q^{14} +3q^{15} +8q^{16}+4q^{17} -8q^{18} -15q^{19}\\[4pt]
&&  -4q^{20} +19q^{21} +27q^{22} -42q^{24} -47q^{25} 
+16q^{26} +91q^{27} +71q^{28} -62q^{29} -178q^{30}\cdots\\[8pt]
X_2(q) &=&
-q^{-1} +1-q+q^2 -q^3 +q^4 -q^6 +q^7 -2q^8 +3q^9 -2q^{10} +2q^{11} 
-2q^{12} -q^{13} +2q^{14}\\[4pt]
&& -2q^{15} +6q^{16} - 
6q^{17} +3q^{18} -6q^{19}+3q^{20} +5q^{21} -2q^{22} +6q^{23}  \\[4pt]
&&
-17q^{24} +7q^{25} -4q^{26} +16q^{27} +2q^{28} -
13q^{29} -3q^{30}\cdots\\[8pt]
X_3(q)&=&
-q^{-2}-q^{-1}+q^3 -q^5 -q^6 +q^8 +2q^9 +q^{10} -2q^{11} 
-4q^{12} -2q^{13} +3q^{14}  \\[4pt]
&&+7q^{15} +5q^{16} -4q^{17} -13q^{18} 
-11q^{19} +5q^{20} +23q^{21} +23q^{22} -5q^{23} \\[4pt]
&& -39q^{24} -44q^{25}
 -4q^{26} +63q^{27} +93q^{28} +29q^{29} -114q^{30} \cdots
\end{array}
$$
The equation $X_1(q)X_2(q)X_3(q)=q^{-3}$ and the second equation~\eqref{Vieta2}
are then obtained as a spectacular cancellation of terms in infinite series.

The coefficient $B_+(q)=X_1(q)+X_2(q)+X_3(q)$ 
of the $q$-deformed equation~\eqref{Cubic2Quant} is in this case
$$
\begin{array}{rcl}
B_+(q)&=&-q^{-2}-2q^{-1}+2-q+q^2 +q^4 -2q^6 -2q^8 +5q^9 +q^{10} +2q^{11} 
-7q^{12} -7q^{13} +2q^{14} \\[4pt]
&&+8q^{15} +19q^{16} -
6q^{17} -18q^{18} -32q^{19} +4q^{20} +47q^{21} +48q^{22} 
+q^{23} -98q^{24} \\[4pt]
&&-84q^{25} +8q^{26} +170q^{27} +
166q^{28} -46q^{29} -295q^{30}\cdots
\end{array}
$$

Our next goal is to analyze the function~$B_+(q)$.
The coefficients of this series 
look like values of an oscillating function with exponentially growing amplitude.
We are convinced that this series does not end, see Fig.~1,
even though we cannot prove it.

\begin{center}
\begin{tikzpicture}[scale=.13]

\draw [->] (-3,0)--(95,0);
\draw [->] (0,-16)--(0,16);

\draw [thick] (1, 0.)--(2, -0.693147)--(3, 0.693147)--(4, 0.)--(5, 0.)--(6, 
  0)--(7, 0.)--(8, 0)--(9, -0.693147)--(10, 
  0)--(11, -0.693147)--(12, 1.60944)--(13, 0.)--(14, 
  0.693147)--(15, -1.94591)--(16, -1.94591)--(17, 0.693147)--(18, 
  2.07944)--(19, 
  2.94444)--(20, -1.79176)--(21, -2.89037)--(22, -3.46574)--(23, 
  1.38629)--(24, 3.85015)--(25, 3.8712)--(26, 
  0.)--(27, -4.58497)--(28, -4.43082)--(29, 2.07944)--(30, 
  5.1358)--(31, 
  5.11199)--(32, -3.82864)--(33, -5.68698)--(34, -5.68358)--(35, 
  4.80402)--(36, 6.27288)--(37, 
  6.07535)--(38, -5.50126)--(39, -6.79571)--(40, -6.44413)--(41, 
  6.20254)--(42, 7.25771)--(43, 
  6.82437)--(44, -7.03439)--(45, -7.67694)--(46, -6.71296)--(47, 
  7.73456)--(48, 
  7.91059)--(49, -6.1334)--(50, -8.11193)--(51, -7.87891)--(52, 
  8.02519)--(53, 8.55101)--(54, 
  7.16472)--(55, -9.22779)--(56, -9.32679)--(57, 9.17906)--(58, 
  10.3327)--(59, 9.53734)--(60, -10.7392)--(61, -11.0588)--(62, 
  7.20266)--(63, 11.5611)--(64, 
  11.7148)--(65, -9.32723)--(66, -12.3435)--(67, -12.6274)--(68, 
  11.0814)--(69, 13.3518)--(70, 
  13.3018)--(71, -12.9909)--(72, -14.1099)--(73, -13.4883)--(74, 
  13.7957)--(75, 14.579)--(76, 
  14.2385)--(77, -14.2023)--(78, -15.4566)--(79, -15.0207)--(80, 
  15.5044)--(81, 16.1697)--(82, 
  14.7705)--(83, -16.3637)--(84, -16.6241)--(85, 14.9505)--(86, 
  16.8314)--(87, 16.3562)--(88, 
  16.4214)--(89, -16.5705)--(90, -18.4451)--(91, 15.5667);

\draw [below] (45,-18.2) node {\textsc{Figure 1.} 
The first $91$ coefficients of the function $B_+(q)$ on a logarithmic scale. };

\end{tikzpicture}
\end{center}

A natural tool for analyzing infinite series $f(z)=c_0+c_1q+c_2q^2+\ldots$ is the so-called \textit{$C$-table}
$$
\begin{array}{|cccc|}
\hline C(0 / 0) & C(1 / 0) & C(2 / 0) & \ldots \\
C(0 / 1) & C(1 / 1) & C(2 / 1) & \ldots \\
C(0 / 2) & C(1 / 2) & C(2 / 2) & \ldots \\
\vdots & \vdots & \vdots & \ddots \\
\hline
\end{array}\,,
$$
which go back to the theory of Pad\'e approximants, see.~\cite{BGM}. 
The $C$-table consists of Hankel determinants
$$C(L/M)=\left|\begin{array}{cccc}
c_{L-M+1} & c_{L-M+2} & \cdots & c_L \\
c_{L-M+2} & c_{L-M+3} & \cdots & c_{L+1} \\
\vdots & \vdots & & \vdots \\
c_L & c_{L+1} & \cdots & c_{L+M-1}
\end{array}\right|\qquad(L,M\ge 0).$$
By definition $C(L/0)=1$, 
the next values are $C(L/1)=c_L$, $C(0/M)=(-1)^{M(M-1)/2}c_0^M$, 
and $c_k:=0$ for $k<0$. 

Rational functions are characterized by the property that the $C$-table contains an infinite block of zeros: 
$C(\lambda+i/\mu+j)=0$ for some $\lambda,\mu\ge 0$ and $i,j=0,1,2, \ldots$

\begin{ex}
(i)
The function 
$$
\left[\frac{5}{3}\right]_q=\frac{1+q+2q^2+q^3}{ 1+q+q^2}=1+q^2-q^4+q^5-q^7+q^8-q^{10}+q^{11}\ldots
$$
(see~\cite{MGOfmsigma}) has the following $C$-table:

\begin{center}
\begin{tabular}{|c|ccccccccccc|}
\hline 
$M\backslash L$
 & $0$ & $1$ & $2$ & $3$ & $4$ & $5$ & $6$ & $7$ & $8$ & $9$ & \\
\hline $0$ & $1$ & $1$ & $1$ & $1$ & $1$ & $1$ & $1$ & $1$ & $1$ & $1$ & $\cdots$ \\
$1$ & $1$ & $0$ & $1$ & $0$ & $-1$ & $1$ & $0$ & $-1$ & $1$ & $0$ & $\cdots$ \\
$2$ & $-1$ & $1$ & $-1$ & $-1$ & $-1$ & $-1$ & $-1$ & $-1$ & $-1$ & $-1$ & $\cdots$ \\
$3$ & $-1$ & $0$ & $2$ & $-1$ & $0$ & $0$ & $0$ & $0$ & $0$ & $0$& $\cdots$ \\
$4$ & $1$ & $2$ & $4$ & $1$ & $0$ & $0$ & $0$ & $0$ & $0$ & $0$ & $\cdots$ \\
$5$ & $1$ & $1$ & $7$ & $1$ & $0$ & $0$ & $0$ & $0$ & $0$ & $0$ & $\cdots$ \\
 $\vdots$ & $\vdots$ & $\vdots$ & $\vdots$ & $\vdots$&$\vdots$ &$\vdots$ & $\vdots$& $\vdots$&$\vdots$ & $\vdots$ &$\ddots$ \\
\hline
\end{tabular}
\end{center}

(ii)
On the other hand,
the $C$-tables of some $q$-deformed quadratic irrationals have some sort of regular behavior. 
For example, the $C$-table of the $q$-deformed golden ratio~\cite{MGOexp,LMGadv}
$$[\varphi]_q=\frac{q^2+q-1+\sqrt{q^4+2 q^3-q^2+2 q+1}}{2 q}$$
has periodic structure in the neighborhood of the main diagonal, see~\cite{OP}:
\begin{center}
\begin{tabular}{|c|ccccccccccccccc|}
\hline 
$M\backslash L$
 & $0$ & $1$ & $2$ & $3$ & $4$ & $5$ & $6$ & $7$ & $8$ & $9$ & $10$ & $11$ & $12$ &$13$ &$14$  \\
\hline $0$ & $1$ & $1$ & $1$ & & & & & & & & & & & & \\
$1$ & $1$ & $0$ & $1$ & $-1$ &  &  & & & & & &&&& \\
$2$ & $-1$ & $1$ & $-1$ & $1$ & $0$ & $0$ & & & & & &&&& \\
$3$ & $-1$ & $1$ & $0$ & $1$ & $0$ & $0$ & & & & & &&&& \\
$4$ & & $-1$ & $-1$ & $-1$ & $-1$ & $-1$ & $-1$ & & & & &&&& \\
$5$ & & & $0$ & $0$ & $-1$ & $0$ & $-1$ & $1$ & &  & &&&& \\
$6$ & & & $0$ & $0$ & $1$ & $-1$ & $1$ & $-1$ & $0$ & $0$ &&&&&  \\
$7$ & & &  & & $1$ & $-1$ & $0$ & $-1$ & $0$ & $0$ &  &&&&\\
$8$ & & &  & & & $1$ & $1$ & $1$ & $1$ & $1$ & $1$&&&& \\
$9$ & & & &  & & & $0$ & $0$ & $1$ & $0$ & $1$ & $-1$&&& \\
$10$ & & & &  & & & $0$ & $0$ & $-1$ & $1$ & $-1$ & $1$ & $0$ & $0$&\\
$11$ & & & & & &  & & & $-1$ & $1$ & $0$ & $1$ & $0$ & $0$ &  \\
$12$ & & & & & & &  & & & $-1$ & $-1$ & $-1$ & $-1$ & $-1$ & $-1$  \\

\hline
\end{tabular}
\end{center}

\end{ex}
\medskip

Computer experiments show that for the series $B_+(q)$
Hankel determinants $C(L/M)$ grow fast. 
For example, the values of $C(L/M)$ for $1\le L,M\le 12$ are  shown in the following table. 

\medskip
\begin{tiny}
\begin{center}
\begin{tabular}{|c|cccccccccccc|}
\hline 
$M\backslash L$
 & $1$ & $2$ & $3$ & $4$ & $5$ & $6$ & $7$ & $8$ & $9$ & $10$ & $11$ & $12$  \\
\hline $1$ & $-2$ & $ 2$ & $ -1$ & $ 1$ & $ 0$ & $ 1$ & $ 0$ & $ -2$ & $ 0$ & $ -2$ & $ 5$ & $ 1$  \\
$2$ & $-6$ & $ -2$ & $ 1$ & $ -1$ & $ 1$ & $ -1$ & $ -2$ & $ -4$ & $ 4$ & $ -4$ & $ -27$ & $ 9$\\
$3$ & $17$ & $ -5$ & $ -1$ & $ 0$ & $ 1$ & $ -3$ & $ 0$ & $ 12$ & $ -20$ & $ 62$ & $ -153$ & $ 216$ \\
$4$ & $49$ & $ 21$ & $ -1$ & $ 1$ & $ -1$ & $ 9$ & $ 18$ & $ 36$ & $ 86$ & $ 196$ & $ 371$ & $ 664$ \\
$5$ & $-140$ & $ 98$ & $ -20$ & $ 7$ & $ 8$ & $ 33$ & $ 9$ & $ 21$ & $ 17$ & $ -105$ & $ 49$ & $ 652$ \\
$6$ & $-402$ & $ -324$ & $ -286$ & $ -209$ & $ -167$ & $ -113$ & $ 34$ & $ -8$ & $ -29$ & $ -52$ & $ -191$ & $ -629 $\\
$7$ &$ 1152$ & $ 102$ & $ 704$ & $ 583$ & $ -534$ & $ -559$ & $ -28$ & $ -50$ & $ -25$ & $ -27$ & $ -77$ & $ -163$ \\
$8$ & $3301$ & $ -2471$ & $ 1525$ & $ 3425$ & $ 3659$ & $ 2633$ & $ 799$ & $ 225$ & $ -25$ & $ -23$ & $ 8$ & $ 24$\\
$9$ & $-9461$ & $ -10508$ & $ -15325$ & $ -10550$ & $ 8184$ & $ 7172$ & $ 1642$ & $ 1412$ & $ 232$ & $ 27$ & $ 8$ & $ -16$ \\
$10$ & $-27113$ & $ -13991$ & $ -81309$ & $ -69116$ & $ -38984$ & $ -14432$ & $ 9300$ & $ -7168$ & $ 628$ & $ -49$ & $ -62$ & $ 156$\\
$11$ & $77698$ & $ -191167$ & $ 368297$ & $ 152348$ & $ -63816$ & $ -79592$ & $ 10328$ & $ -32252$ & $ -186$ & $ 
-1531$ & $ -1436$ & $ -556$ \\
$12$ & $222667$ & $ 566713$ & $ 2026425$ & $ 675866$ & $ 415508$ & $ 484616$ & $ 264552$ & $ 145384$ & $ 78572$ & $ 42385$ & $ 19530$ & $ 5272$ \\

\hline
\end{tabular}
\end{center}
\end{tiny}

\medskip

\noindent
This behavior clearly indicates that the function~$B_+(q)$ is not a rational function.

\subsection{The ``quantum nonagon''}

The quantum version of the equation~\eqref{NonaEq} with the roots~\eqref{NonaRoots} is similar. 
Computer computation of the $q$-deformed roots give the following Laurent series in~$q$
$$
\begin{array}{rcl}
X_1(q) &=&
1+q^2-q^3+q^4-q^5+q^6-q^7+q^8-q^9+2 q^{10}-3 q^{11}+3 q^{12}-5 q^{13}+8 q^{14}\\[4pt]
&&-7 q^{15}+5 q^{16}-9 q^{17}+13 q^{18}-3 q^{19}-9 q^{20}-2 q^{21}+
5q^{22}+42 q^{23}-71 q^{24}\\[4pt]
&&+22 q^{25}-39 q^{26}+202 q^{27}-217 q^{28}-136 q^{30}\cdots\\[8pt]
X_2(q) &=&
q^2-q^3+q^5-q^6+q^8-q^9+q^{10}-q^{11}-q^{12}+2 q^{13}+q^{14}-q^{15}-5 q^{16}\\[4pt]
&&+3 q^{17}+10 q^{18}-4 q^{19}-26 q^{20}+20 q^{21}+28 q^{22}-9q^{23}-69 q^{24}\\[4pt]
&&+11 q^{25}+122 q^{26}+21 q^{27}-236 q^{28}-107 q^{29}+471 q^{30}\cdots\\[8pt]
X_3(q)&=&
-\frac{1}{q^2}-\frac{1}{q}+q^6-q^8-2 q^9+q^{10}+4 q^{11}-q^{12}-6 q^{13}-q^{14}+15 q^{15}-q^{16}\\[4pt]
&&-20 q^{17}-14 q^{18}+44 q^{19}+24 q^{20}-60
   q^{21}-70 q^{22}+97 q^{23}+161 q^{24}\\[4pt]
&&-155 q^{25}-305 q^{26}+139 q^{27}+697 q^{28}-185 q^{29}-1192 q^{30} \cdots
\end{array}
$$

Vieta's formulas~\eqref{Vieta1} again express non-trivial arithmetic properties of series $X_k(q)$.
The coefficient $B_-(q)=X_1(q)+X_2(q)+X_3(q)$ 
of the $q$-deformed equation~\eqref{Cubic1Quant} is in this case
$$
\begin{array}{rcl}
B_-(q)&=&-\frac{1}{q^2}-\frac{1}{q}+1+2 q^2-2 q^3+q^4+q^6-q^7+q^8-4 q^9+4 q^{10}+q^{12}-9 q^{13}+8 q^{14}\\[4pt]
&&+7 q^{15}-q^{16}-26 q^{17}+9 q^{18}+37 q^{19}-11
   q^{20}-42 q^{21}-37 q^{22}+130 q^{23}+21 q^{24}\\[4pt]
   &&-122 q^{25}-222 q^{26}+362 q^{27}+244 q^{28}-292 q^{29}-857 q^{30}\cdots
\end{array}
$$

The coefficients of the series $B_-(q)$
show a similar behavior as the coefficients of the series $B_+(q)$; see Fig.~2.

\begin{center}
\begin{tikzpicture}[scale=.05]

\draw [->] (-3,0)--(255,0);
\draw [->] (0,-75)--(0,75);

\draw [thick] (1, 0.)--(2, 0.)--(3, 0.)--(4, 0)--(5, 
  0.693147)--(6, -0.693147)--(7, 0.)--(8, 0)--(9, 
  0.)--(10, 0.)--(11, 0.)--(12, -1.38629)--(13, 1.38629)--(14, 
  0)--(15, 0.)--(16, -2.19722)--(17, 2.07944)--(18, 
  1.94591)--(19, 0.)--(20, -3.2581)--(21, 2.19722)--(22, 
  3.61092)--(23, -2.3979)--(24, -3.73767)--(25, -3.61092)--(26, 
  4.86753)--(27, 3.04452)--(28, -4.80402)--(29, -5.40268)--(30, 
  5.89164)--(31, 5.49717)--(32, -5.67675)--(33, -6.75344)--(34, 
  6.56667)--(35, 7.12367)--(36, -6.02828)--(37, -7.97212)--(38, 
  6.43294)--(39, 8.42595)--(40, 
  6.82871)--(41, -9.06056)--(42, -8.12059)--(43, 9.55733)--(44, 
  9.13594)--(45, -9.95916)--(46, -10.1151)--(47, 10.4837)--(48, 
  10.6894)--(49, -10.5327)--(50, -11.5528)--(51, 11.0298)--(52, 
  11.9916)--(53, -9.76308)--(54, -12.7136)--(55, -7.67276)--(56, 
  13.0958)--(57, 12.3259)--(58, -13.6228)--(59, -13.2199)--(60, 
  13.9366)--(61, 14.0651)--(62, -14.1239)--(63, -14.7935)--(64, 
  14.1788)--(65, 15.2817)--(66, 
  12.3905)--(67, -15.85)--(68, -14.879)--(69, 15.9823)--(70, 
  16.3584)--(71, -16.0794)--(72, -17.0859)--(73, -15.5554)--(74, 
  17.8402)--(75, 17.6263)--(76, -18.3034)--(77, -18.8837)--(78, 
  18.5875)--(79, 
  19.8407)--(80, -18.1722)--(81, -20.6595)--(82, -19.2294)--(83, 
  21.4097)--(84, 20.8739)--(85, -22.0543)--(86, -22.0695)--(87, 
  22.6479)--(88, 23.0383)--(89, -23.1182)--(90, -23.9275)--(91, 
  23.4646)--(92, 
  24.7322)--(93, -23.4559)--(94, -25.4885)--(95, -21.4257)--(96, 
  26.1936)--(97, 24.9841)--(98, -26.8536)--(99, -26.3225)--(100, 
  27.4682)--(101, 27.3717)--(102, -28.0279)--(103, -28.2956)--(104, 
  28.5248)--(105, 29.137)--(106, -28.9235)--(107, -29.9239)--(108, 
  29.1576)--(109, 
  30.6633)--(110, -28.9187)--(111, -31.3638)--(112, -28.9472)--(113, 
  32.0264)--(114, 30.9736)--(115, -32.6515)--(116, -32.1779)--(117, 
  33.2363)--(118, 33.1672)--(119, -33.7735)--(120, -34.0494)--(121, 
  34.2507)--(122, 34.8623)--(123, -34.6395)--(124, -35.625)--(125, 
  34.8736)--(126, 
  36.346)--(127, -34.6978)--(128, -37.0308)--(129, -34.268)--(130, 
  37.6811)--(131, 36.5215)--(132, -38.2969)--(133, -37.745)--(134, 
  38.8755)--(135, 38.7365)--(136, -39.411)--(137, -39.616)--(138, 
  39.8923)--(139, 40.4254)--(140, -40.2971)--(141, -41.1841)--(142, 
  40.5747)--(143, 41.902)--(144, -40.5639)--(145, -42.5844)--(146, 
  37.8425)--(147, 43.2339)--(148, 
  41.7886)--(149, -43.8506)--(150, -43.1305)--(151, 44.4329)--(152, 
  44.1609)--(153, -44.9761)--(154, -45.0587)--(155, 45.4718)--(156, 
  45.8784)--(157, -45.9036)--(158, -46.6438)--(159, 46.237)--(160, 
  47.367)--(161, -46.3807)--(162, -48.0543)--(163, 45.8993)--(164, 
  48.7092)--(165, 46.6327)--(166, -49.3327)--(167, -48.3283)--(168, 
  49.9239)--(169, 49.4495)--(170, -50.4801)--(171, -50.3886)--(172, 
  50.9951)--(173, 51.2322)--(174, -51.4579)--(175, -52.0135)--(176, 
  51.8467)--(177, 52.7487)--(178, -52.1126)--(179, -53.4464)--(180, 
  52.1044)--(181, 
  54.1113)--(182, -50.1276)--(183, -54.7455)--(184, -53.2088)--(185, 
  55.3494)--(186, 54.5594)--(187, -55.9216)--(188, -55.5831)--(189, 
  56.4585)--(190, 56.4712)--(191, -56.9532)--(192, -57.2806)--(193, 
  57.3933)--(194, 58.036)--(195, -57.7539)--(196, -58.75)--(197, 
  57.9769)--(198, 
  59.4293)--(199, -57.865)--(200, -60.0779)--(201, -56.2862)--(202, 
  60.6975)--(203, 59.3032)--(204, -61.2882)--(205, -60.5538)--(206, 
  61.8487)--(207, 61.5366)--(208, -62.3756)--(209, -62.3988)--(210, 
  62.8629)--(211, 63.1888)--(212, -63.2996)--(213, -63.9282)--(214, 
  63.6645)--(215, 64.6285)--(216, -63.911)--(217, -65.2963)--(218, 
  63.8984)--(219, 
  65.9353)--(220, -62.3576)--(221, -66.5478)--(222, -64.8437)--(223, 
  67.1344)--(224, 66.1999)--(225, -67.6951)--(226, -67.2086)--(227, 
  68.2284)--(228, 68.0778)--(229, -68.7319)--(230, -68.868)--(231, 
  69.2009)--(232, 69.6052)--(233, -69.6277)--(234, -70.3033)--(235, 
  69.9989)--(236, 70.9704)--(237, -70.2879)--(238, -71.6121)--(239, 
  70.4313)--(240, 
  72.2324)--(241, -70.2033)--(242, -72.8346)--(243, -69.2794)--(244, 
  73.4217)--(245, 71.6939)--(246, -73.9969)--(247, -72.8538)--(248, 
  74.5635)--(249, 73.7736)--(250, -75.1254);

\draw [below] (125,-75.2) node {\textsc{Figure 2.} 
The first $250$ coefficients of the function $B_-(q)$ on a logarithmic scale. };

\end{tikzpicture}
\end{center}

The Hankel determinants $C(L/M)$ for the series $B_-(q)$ grow fast as in the case of  $B_+(q)$. This observation allows us to state a conjecture that $B_-(q)$ is not rational function in this case.

\bigbreak \noindent
{\bf Acknowledgements}.
We are grateful to Sophie Morier-Genoud for enlightening discussions.

\end{document}